%%% WHO IS EDITING: Nobody
%%% LAST EDIT: Harry @ 9:50 PM 4/11/17 - typos
\documentclass{article}
\usepackage{amssymb,amsmath,amsthm,graphicx}
\usepackage[usenames,dvipsnames,svgnames,table]{xcolor}
\usepackage{color}
\textheight 8.5in
\textwidth 6.5in
\oddsidemargin 0in
\topmargin 0in

\newtheorem{theorem}{Theorem}

\newtheorem{proposition}[theorem]{Proposition}
\newtheorem{corollary}[theorem]{Corollary}
\newtheorem{conjecture}{Conjecture}
\theoremstyle{definition}
\newtheorem{example}{Example}
\newtheorem{definition}{Definition}

\date{}

\newcommand{\vc}[1]{\left\langle #1 \right\rangle}

\title{\Large \textbf{Boltzmann Enhancements of Biquasile Counting Invariants}}

\author{WonHyuk Choi\footnote{Email: wonhyuk.h.choi@gmail.com }\and
Deanna Needell \footnote{Email: deanna@math.ucla.edu. Partially supported by NSF CAREER $\#1348721$.} \and
Sam Nelson\footnote{Email: Sam.Nelson@cmc.edu. Partially supported by Simons Foundation collaboration grant $\#316709$.}}

\begin{document}
\maketitle

\begin{abstract}
In this paper, we build on the biquasiles and dual graph diagrams introduced 
in \cite{dsn2}. We introduce \textit{biquasile Boltzmann weights} that enhance 
the previous knot coloring invariant defined in terms of finite biquasiles and 
provide examples differentiating links with the same counting invariant, 
demonstrating that the enhancement is proper. We identify conditions for
a linear function $\phi:\mathbb{Z}_n[X^3]\to\mathbb{Z}_n$ to be a Boltzmann
weight for an Alexander biquasile $X$.  
\end{abstract}

\parbox{5.5in} {\textsc{Keywords:} biquasiles, dual graph diagrams, 
enhancements of counting invariants, Boltzmann weights

\smallskip

\textsc{2010 MSC:} 57M27, 57M25}

\section{Introduction}

In \cite{dsn2}, the second and third listed authors introduced a 
combinatorial structure known as \textit{dual graph diagrams} for 
representing oriented knots and links and a corresponding algebraic
structure known as \textit{biquasiles} for 
defining knot and link invariants via counting vertex colorings of
dual graph diagrams with biquasile elements satisfying certain conditions.
Dual graph diagrams arise from taking both checkerboard graphs of a
knot or link diagram together, sometimes called the \textit{overlaid 
Tait graph}, and adding edge decorations to indicate crossing signs and 
orientations. Biquasiles are algebraic structures consisting of two 
quasigroup operations on a set $X$ which interact according to certain
identities, analogous in some sense to the two group structures on a field
interacting via the distributive law.

Given an oriented knot or link $L$ and a finite biquasile $X$, the number of
vertex colorings of the corresponding dual graph diagram by $X$ is unchanged
by Reidemeister moves and hence defines an integer valued computable invariant
of oriented knots and links. Starting with \cite{CJKLS} and continuing with 
subsequent papers such as \cite{CES, CNS,NN} and many more, counting
invariants of knotted objects associated to various algebraic structures 
such as quandles, biquandles, racks, biracks, kei and bikei
have been enhanced to obtain stronger invariants by defining
invariants $\phi$ of colored knots or links. The resulting multiset of 
$\phi$-values over the set of colorings of a knot or link then defines a
generally stronger invariant whose cardinality recovers the original counting
invariant.

In this paper we enhance the biquasile counting invariant with 
\textit{Boltzmann weights}, functions from the set of ordered triples of
elements of a biquasile $X$ to an abelian group $A$ with the property that
the sum of Boltzmann weights at crossings is unchanged by Reidemeister moves
and hence defines an $A$-valued invariant of $X$-colored dual graph diagrams 
under Reidemeister equivalence, analogously to the quandle and biquandle 
2-cocycle invariants studied in \cite{CJKLS,EN} etc. The paper is organized 
as follows. In Section \ref{DB} we recall the basics of dual graph diagrams
and biquasiles. In Section \ref{B} we define Boltzmann enhancements and 
provide examples.
We close in Section \ref{Q} with some questions for future research.

\section{Dual Graph Diagrams and Biquasiles}\label{DB}

We begin with two notions from \cite{dsn2}.
\begin{definition}
Let $D$ be an oriented knot or link diagram. We form the \textit{dual graph
diagram} associated to $D$ by placing a vertex in every region and making 
two regions adjacent if they are opposite at a crossing. We then give each edge 
either a direction or a $+$ or $-$ sign as depicted:
\[\includegraphics{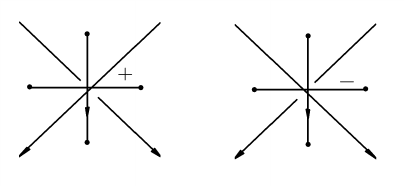}\]
\end{definition}

\begin{example}
The Hopf link below has the pictured dual graph diagram.
\[\includegraphics{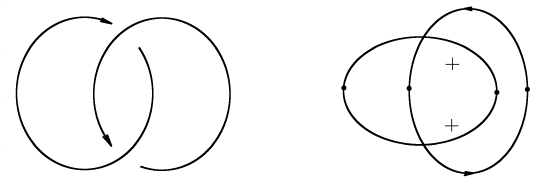}\]
\end{example}

As noted in \cite{dsn2}, dual graph diagrams are always pairs of dual planar
graphs with the property that each edge crosses exactly one other edges, with 
crossing edges forming pairs where one edge has a sign and the other has a 
direction. 

\begin{definition}
\label{dual_graph_reid} 
Two dual graph diagrams are \textit{equivalent} if they are related by a 
sequence of the following moves:
\[\includegraphics{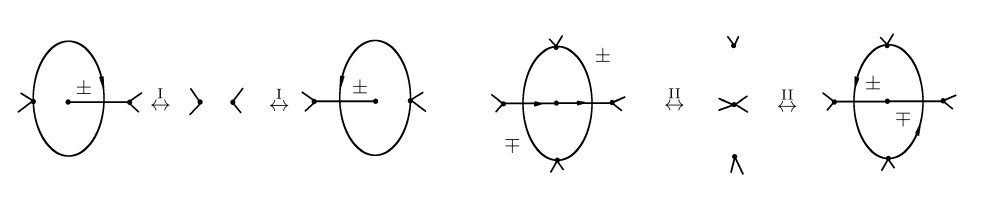}\]
\[\includegraphics{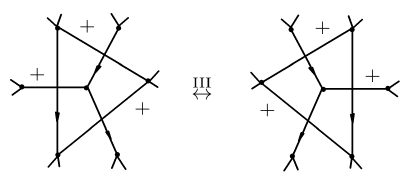}\]
\end{definition}

These dual graph moves form a generating set of the oriented Reidemeister move expressed in a dual graph format. Next, we define biquasiles. 

\begin{definition}
Let $X$ be a set with binary operations $\ast, \cdot, \backslash^{\ast}, /^{\ast}, 
\backslash, /:X\times X\to X$ satisfying
\[\begin{array}{rcccl}
y\backslash^{\ast}(y\ast x) & = & x & = & (x\ast y)/^{\ast} y \\
y\backslash (y\cdot x) & = & x & = & (x\cdot y)/y. 
\end{array}\]
Then we say $X$ is a \textit{biquasile} if for all $a,b,x,y\in X$ we have
\[\begin{array}{rcll}
a\ast(x\cdot [y\ast(a\cdot b)]) & = & (a\ast[x\cdot y])\ast(x\cdot [y\ast([a\ast(x\cdot y)]\cdot b)]) & (i) \\
y\ast([a\ast (x\cdot y)]\cdot b) & = & (y\ast[a\cdot b])\ast([a\ast (x\cdot [y\ast(a\cdot b)])]\cdot b) & (ii).
\end{array}\]
\end{definition}

\begin{example}
\label{Alexander}
Let $R$ be any commutative ring with identity and let $d,s,n\in R$ be units.
Then $X$ is a biquasile under the operations
\[x\cdot y= dx+sy\quad\mathrm{and}\quad x\ast y=-dsn^2x+ny.\]
Such a biquasile is called an \textit{Alexander biquasile}; see \cite{dsn2} 
for more.
\end{example}
\begin{example}\label{ex:bq1}
For any finite set $X=\{x_1,\dots, x_n\}$ we can define a biquasile structure on
$X$ by choosing operation tables for $\ast$ and $\cdot$ so that the biquasile 
axioms are satisfied. To save writing, we can drop the ``$x$''s and write only
the subscripts, resulting in a block matrix. For example, the Alexander 
biquasile structure on $\mathbb{Z}_3=\{x_1=1,x_2=2,x_3=3\}$ (where we use 3 for 
the class of zero in $\mathbb{Z}_3$ since we start numbering our rows and 
columns with 1 instead of 0) with $d=1$, $s=1$ and $n=2$ has operations 
$x\ast y=-dsn^2+ny=2x+2y$ and $x\cdot y=dx+sy=x+y$ with operation tables and matrix
\[
\begin{array}{r|rrr}
\ast & 1 & 2 & 3 \\ \hline
1 & 1 & 3 & 2\\
2 & 3 & 2 & 1\\
3 & 2 & 1 & 3
\end{array}
\quad
\begin{array}{r|rrr} 
\cdot & 1 & 2 & 3 \\ \hline
1 & 2 & 3 & 1\\
2 & 3 & 1 & 2\\
3 & 1 & 2 & 3
\end{array}\quad
\leftrightarrow\quad
\left[\begin{array}{rrr|rrr}
1 & 3 & 2 &2 & 3 & 1 \\
3 & 2 & 1 &3 & 1 & 2 \\
2 & 1 & 3 &1 & 2 & 3 \\
\end{array}\right].
\]
\end{example}

\begin{definition}
Given a dual graph diagram $D$ and a biquasile $X$, an \textit{$X$-coloring}
of $D$ is an assignment of elements of $X$ to the vertices of $D$ such that 
at every crossing we have the following pictures:
\[\includegraphics{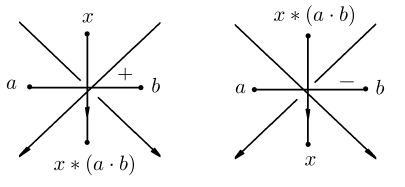}\]
\end{definition}

The biquasile axioms are chosen so that given a valid $X$-coloring of a diagram
before a move, there is a unique corresponding valid coloring of the resulting
diagram after the move. It follows that the number of $X$-colorings of
a dual graph diagram is an invariant of oriented knots and links, denoted
$\Phi_X^{\mathbb{Z}}(L)$, called the \textit{biquasile counting invariant}. 

\begin{definition}
Let $L$ be a dual graph diagram and $X$ the set of its vertices. Then the 
\textit{fundamental biquasile} of $L$ is the biquasile with presentation
 $\vc{\ X \mid R\ }$ where for each edge crossing, a relation is defined by the figure above. 
More precisely,
the elements of the fundamental biquasile are equivalence classes of biquasile 
words in generators corresponding to the vertices of the dual graph diagram 
modulo the equivalence relation generated by the crossing relations and biquasile axioms. See \cite{dsn2} for more.
\end{definition}

\begin{example}
Let us assign generators to the vertices in the Hopf link dual graph as 
pictured.
\[\includegraphics{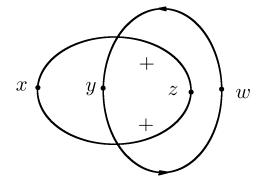}\]
Reading the crossing relations from the diagram, the Hopf link has
fundamental biquasile presentation
\[\mathcal{FB}(L)=\langle x,y,z,w\ |\ y=w\ast(x\cdot z), w=y\ast(x\cdot z)\rangle.\]

Then with coloring biquasile $X$ given by the Alexander biquasile 
$\mathbb{Z}_3$ with $d=s=1$ and $n=2$,
we obtain coloring equations 
\begin{eqnarray*}
y & = & w\ast(x\cdot z)\\
 & = & x+z+ 2w\quad \mathrm{and} \\
w & = & y\ast(x\cdot z)\\
 & = & x+z+2y\\
\end{eqnarray*}
so we have a homogeneous system of linear 
equations over $\mathbb{Z}_3$ with coefficient matrix
\[\left[\begin{array}{rrrr}
1 & 2 & 1 & 2 \\
1 & 2 & 1 & 2
\end{array}\right].\]
After row reduction over $\mathbb{Z}_3$, we obtain
\[
\left[\begin{array}{rrrr}
1 & 2 & 1 & 2 \\
0 & 0 & 0 & 0
\end{array}\right]
\]
so the kernel has dimension 3 and we have $\Phi_X^{\mathbb{Z}}(L)=3^3=27.$
\end{example}

\section{Boltzmann Enhancements}\label{B}

In this section we will enhance the biquasile counting invariant using Boltzmann weights valued
in an abelian group $A$,  a 
strategy which has proved effective in the cases of other knot coloring
structures such as quandles and biquandles.

\begin{definition}
\label{BW_Enhancement}
Let $X$ be a biquasile and $A$ an abelian group. Then a \textit{biquasile Boltzmann 
weight} is an $A$-linear map $\phi: A[X^3] \to A$ such that for all 
$x,y,a,b\in X$ we have
\begin{itemize}
\item[(i)] \[
\phi(x,a,a\backslash(x\backslash^{\ast}x))
%=\phi(x,a,a\backslash(x\ast x))
=\phi(x,(x\backslash^{\ast} x)/b,b)
%=\phi(x,(x\ast x)/b,b)
=0
\]
and
\item[(ii)]
\[\begin{array}{l}
\phi(x,a,b)+\phi(b,x\ast(a\cdot b),y)
+\phi(x\ast(a\cdot b),a,b\ast([x\ast(a\cdot b)]\cdot y)) \\
 = 
\phi(b,x,y) +
\phi(x,a,b\ast(x\cdot y))
+\phi(b\ast(x\cdot y), x\ast(a\cdot[b\ast(x\cdot y)]),y).
\end{array}\]
\end{itemize}
\end{definition}

The biquasile Boltzmann weight axioms are chosen so that the sum of Boltzmann 
weight values according to the rules
\[\includegraphics{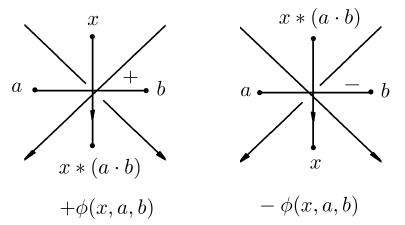}\]
are unchanged by dual graph Reidemeister moves  as expressed in Definition 
\ref{dual_graph_reid}. We prove this in the following 
proposition: 

\begin{proposition}
Let $D$ be a dual graph diagram with a coloring by a biquasile $X$. Then
if $\phi:X^3\to A$ is a biquasile Boltzmann weight, the sum of the $\phi$
values over all edge crossings in $D$ is unchanged by dual graph
Reidemeister moves. 
\end{proposition}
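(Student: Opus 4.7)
The plan is to verify invariance one dual graph Reidemeister move at a time, exploiting the fact that the Boltzmann weight axioms were designed precisely to match the local contributions from each move. Since the underlying $X$-coloring is already invariant under these moves (as proved in \cite{dsn2}), it suffices to check that, for each move, the sum of $\phi$-contributions over the crossings appearing in the local tangle on the left equals the sum on the right.

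First I would dispatch the Reidemeister I analogues. The forced coloring around the self-crossing of a kink can be read off from the coloring rule and the quasigroup operations: for the two orientations of the kink, propagating a single color $x$ around the loop (along with an auxiliary vertex color $a$ or $b$) produces precisely the triples $(x,a,a\backslash(x\backslash^{\ast}x))$ and $(x,(x\backslash^{\ast}x)/b,b)$ appearing in axiom (i). That axiom then immediately gives $0$ for the contribution of the new crossing, so the total $\phi$-sum is preserved.

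Next I would handle the Reidemeister II analogues. Here two crossings are created or destroyed; by the sign conventions indicated in the figure, these two crossings contribute Boltzmann weights with opposite signs. Using the two-sided division axioms $y\backslash^{\ast}(y\ast x)=x$ and $y\backslash(y\cdot x)=x$ (and their companions with $/$ and $/^{\ast}$) one checks that the triples of colors fed into $\phi$ at the two crossings agree, so the contributions cancel algebraically. Finally, for the Reidemeister III analogues, I would label the three incoming strands with colors $x,a,b,y$ and propagate through both sides of the move via the coloring rule, reading off the three arguments to $\phi$ at each of the three crossings. The resulting equation between the two sums is exactly axiom (ii) (after the obvious identifications), completing that case.

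The main obstacle is bookkeeping: each of the oriented Reidemeister moves has several dual graph incarnations depending on the choice of orientation and crossing sign on each strand, and writing out the coloring that arrives at every crossing requires careful use of the four division operations $\backslash^{\ast},/^{\ast},\backslash,/$. To keep this manageable I would reduce to a generating subfamily of moves, noting that reversing a crossing sign corresponds to negating the associated $\phi$-contribution and that the remaining orientation variants can be obtained by composing the generating moves with Reidemeister II; once a generating family is handled, the rest follow formally. Beyond this combinatorial overhead, the verification is a routine but lengthy application of the axioms in Definition \ref{BW_Enhancement}.
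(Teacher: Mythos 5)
Your proposal matches the paper's proof essentially step for step: the paper likewise verifies invariance move by move, deriving axiom (i) from the Reidemeister I kink (by solving $x = x\ast(a\cdot b)$ for $a$ and for $b$), observing that the $\pm$ sign convention makes the two Reidemeister II crossings cancel, and identifying the two sides of the Reidemeister III move with the two sides of axiom (ii). No substantive difference in approach.
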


\begin{proof}
Comparing the two sides of the dual graph Reidemeister I moves as 
labeled, 
\[\includegraphics{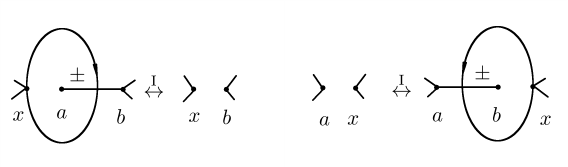}\]
we have the requirement that 
\[\pm \phi(x,a,b)=0\]
when $x=x\ast(a\cdot b)$. Solving for $a$ and $b$, this
yields the requirements that
\[\pm\phi(x,a,a\backslash(x\backslash^{\ast}x))
=\pm\phi(x,(x\backslash^{\ast} x)/b,b)=0\]
%$\phi(x,(x\ast x)/b,b)$
for all $x,a,b\in X$.

Our choice of $\phi(x,a,b)$ at positive crossings and $-\phi(x,a,b)$ at
negative crossings (with $x$ the output label) satisfies the Reidemeister II
moves. More precisely,
comparing the two sides of the dual graph Reidemeister II moves as 
labeled yields $\phi(x,a,b)-\phi(x,a,b)$ and $0$ respectively:
\[\includegraphics{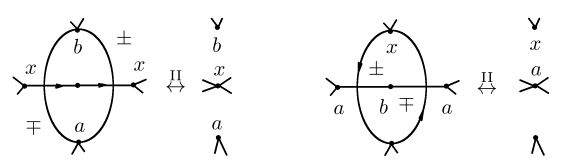}\]
%The other two Reidemeister II cases are similar.

Finally, comparing the two sides of the dual graph Reidemeister III move as 
labeled yields
\[\phi(x,a,b)+\phi(b,x\ast(a\cdot b),y)
+\phi(x\ast(a\cdot b),a,b\ast([x\ast(a\cdot b)]\cdot y))\] on one side
and  
\[\phi(b,x,y) +
\phi(x,a,b\ast(x\cdot y))
+\phi(b\ast(x\cdot y), x\ast(a\cdot[b\ast(x\cdot y)]),y)\]
on the other:
\[\includegraphics{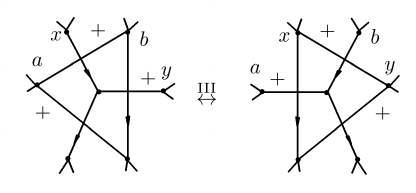}\]
\end{proof}

\begin{corollary}
If $X$ is a biquasile and $\phi:A[X^3]\to A$ is a biquasile Boltzmann weight,
then the multiset $\Phi_X^{M,\phi}(L)$ of $\phi$ values over the set of 
$X$-colorings of a dual graph diagram $D$ representing an oriented knot or 
link $L$ is an invariant of knots and links.
\end{corollary}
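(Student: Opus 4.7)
The plan is to combine the proposition with the bijection on colorings that is already built into the biquasile axioms. Recall from the discussion after the definition of an $X$-coloring that, for any dual graph Reidemeister move, a valid $X$-coloring of the diagram before the move extends uniquely to a valid $X$-coloring of the diagram after the move; this is exactly what the biquasile axioms were designed to guarantee, and it is what makes the counting invariant $\Phi_X^{\mathbb{Z}}(L)$ well-defined. In particular, each Reidemeister move induces a bijection between the set of $X$-colorings of the two diagrams.

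With that bijection in hand, the corollary will follow by matching multisets termwise. First I would fix two dual graph diagrams $D$ and $D'$ representing $L$ that differ by a single dual graph Reidemeister move, and let $\beta$ denote the bijection on $X$-colorings induced by that move. For an arbitrary coloring $f$ of $D$, the preceding proposition says that the sum $\sum_c \phi(c)$ over all crossings $c$ of $D$ (with the signs and triples as described) equals the corresponding sum over all crossings of $D'$ under the coloring $\beta(f)$: the proposition was proved exactly by verifying, locally at each generating dual graph Reidemeister move, that the Boltzmann contributions coming from crossings inside the move-disk agree on the two sides (with crossings outside the disk unchanged). Hence $f$ and $\beta(f)$ contribute the same element of $A$ to the multiset $\Phi_X^{M,\phi}$.

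Therefore the map $f\mapsto \beta(f)$ is a bijection $\{\text{colorings of }D\}\to\{\text{colorings of }D'\}$ that preserves the $\phi$-weight of each coloring, and consequently the multisets $\Phi_X^{M,\phi}(D)$ and $\Phi_X^{M,\phi}(D')$ are equal. Since any two diagrams of $L$ are related by a finite sequence of oriented Reidemeister moves, and dual graph Reidemeister moves form a generating set for these by Definition~\ref{dual_graph_reid}, iterating gives equality of multisets for any two diagrams of $L$, so $\Phi_X^{M,\phi}(L)$ is well-defined.

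I expect no genuine obstacle here: the two ingredients (bijection on colorings, invariance of the crossing sum for each fixed coloring) are exactly what the previous definitions and proposition provide, and the only mild bookkeeping point is to make sure that when comparing $D$ and $D'$ the contributions of crossings outside the local move region cancel because the colors there are unchanged. This is immediate from the local nature of the Reidemeister moves, so the corollary follows formally from the proposition.
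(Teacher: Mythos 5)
Your argument is correct and is exactly the (implicit) argument the paper intends: the corollary is stated without proof as an immediate consequence of the proposition, combining the move-induced bijection on colorings with the invariance of the crossing sum for each fixed coloring. Your write-up simply makes that standard reasoning explicit.
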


We can convert the multiset of Boltzmann weights into a ``polynomial'' form by
converting multiplicities to coefficients and elements to exponents of a formal 
variable $u$ for ease of comparison, e.g. the multiset $\{0,0,0,1,1,2,3,3\}$ 
becomes $3+2u+u^2+2u^3$. With this notation, we will write
\[\Phi_X^{\phi}(L)=\sum_{f \in \{X-\mathrm{colorings}\}} u^{BW(f)}\]
and call $\Phi^{\phi}_X(L)$ the \textit{Boltzmann enhanced polynomial} of $L$
with respect to the biquasile $X$ and the Boltzmann weight $\phi$. 
While this convention only yields a true (Laurent) polynomial 
if the abelian group $A$ is the integers $\mathbb{Z}$, this notation has the 
advantage that evaluation at $u=1$ (using the rule that $1^x=1$ for all 
$x\in A$) recovers the cardinality of the multiset.

As an $A$-linear function, a Boltzmann weight can be conveniently expressed
as an $A$-linear combination of characteristic functions 
\[\chi_{x,y,z}(a,b,c)=\left\{\begin{array}{ll}
1 & (a,b,c)=(x,y,z) \\
0 & (a,b,c)\ne(x,y,z). \\
\end{array}\right.
\]

\begin{example}\label{ex:aff}
Let $X$ be the biquasile with operation matrix
\[\left[\begin{array}{rr|rr}
1 & 2 & 2 & 1\\
2 & 1 & 1 & 2
\end{array}\right].\]
This biquasile can be written as $\mathbb{Z}_2=\{1,2\}$ with operations
$x\ast y=x+y$ and $x\cdot y=x+y+1$.
Then our \texttt{Python} computations reveal 125 Boltzmann weights on $X$ with values in
$\mathbb{Z}_5$, including for instance 
\[\phi=2\chi_{1,1,1}+3\chi_{1,2,2}+4\chi_{2,1,1}+3\chi_{2,2,2}.\]
The Hopf link $L2a1$ and the $(4,2)$-torus link $L4a1$ have the same counting 
invariant value $\Phi_X^{\mathbb{Z}}(L2a1)=\Phi_X^{\mathbb{Z}}(L4a1)=8$, 
distinguishing both from the unlink of two components which has counting 
invariant value $\Phi_X^{\mathbb{Z}}(U_2)=4$. Let us use the Boltzmann enhancement
to distinguish the two.

The Hopf link has eight biquasile colorings by $X$, including for instance
\[\includegraphics{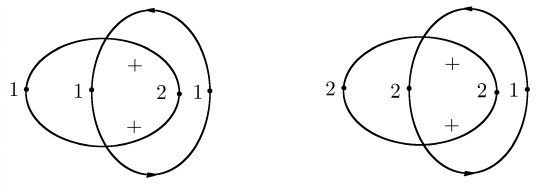}\]
The coloring on the left has Boltzmann weight $2\phi(1,1,2)=2(0)=0$
while the coloring on the right has Boltzmann weight
$\phi(2,2,2)+\phi(1,2,2)=3+3=1$. Computing the other six Boltzmann weights,
we obtain $\Phi^{\phi}_X(L)=4+4u$.

The $(4,2)$-torus link has eight $X$-colorings including
\[\includegraphics{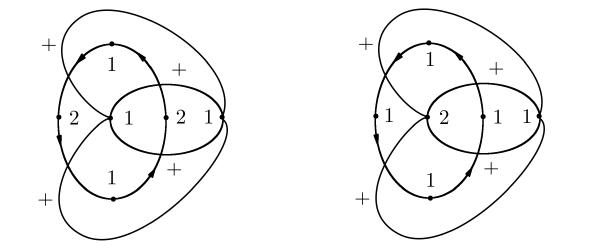}\]
The coloring on the left has Boltzmann weight $2\phi(1,1,1)+2\phi(2,1,1)=4+8=2$
while the coloring on the right has Boltzmann weight
$4\phi(1,2,1)=0$. Computing the other six Boltzmann weights,
we obtain $\Phi^{\phi}_X(L)=4+4u^2$, which distinguishes this link from the Hopf 
link. In particular, this example shows that the Boltzmann enhancement is a 
proper enhancement, i.e., a stronger invariant than the unenhanced biquasile counting
invariant.
\end{example}

\begin{example}
Continuing with the biquasile $X$ from example \ref{ex:aff}, we selected three
Boltzmann weights with values in $\mathbb{Z}_6$ and computed $\Phi^{\phi}_X(L)$ for the list of prime links
with up to seven crossings as listed in the Knot Atlas \cite{KA}; the results
are collected in the table.
\[\begin{array}{rcl}
\phi_1 & = & \chi_{1,1,1}+5\chi_{1,2,2}+3\chi_{2,1,1}+5\chi_{2,2,2}, \\ 
\phi_2 & = & \chi_{1,1,1}+\chi_{1,2,2}+2\chi_{2,1,1}+2\chi_{2,2,2}\quad \mathrm{and}\\
\phi_3 & = & 4\chi_{1,2,2}+2\chi_{2,2,2}
\end{array}\]
\[
\begin{array}{|c|ccccccccc|}\hline
L & L2a1 & L4a1 & L5a1 & L6a1 & L6a2 & L6a3 & L6a4 & L6a5 & L61 \\ \hline
\Phi_X^{\phi_1}(L) &4+4u^4 & 4+4u^2 & 8 & 4+4u^2 & 8 & 8 & 4 & 
4+12u^2 & 4+12u^2 \\
\Phi_X^{\phi_2}(L) & 4+4u^3 & 8 & 8 & 8 & 4+4u^3 & 4+4u^3 & 4 & 
16 & 16 \\
\Phi_X^{\phi_3}(L) & 8 & 8 & 8 & 8 & 8 & 8 & 4 & 16 & 16 \\
\hline
L & L7a1 & L7a2 & L7a3 & L7a4 & L7a5 & L7a6 & L7a7 & L7n1 & L7n2 \\\hline
\Phi_X^{\phi_1}(L) & 8& 4+4u^2& 8 & 8 & 4+4u^4 & 4+4u^4 & 12+4u^2 & 4+4u^4 & 8 \\
\Phi_X^{\phi_2}(L) & 8 & 8 & 8& 8 & 4+4u^3 & 4+4u^3 & 16 & 8 & 8\\
\Phi_X^{\phi_3}(L) & 8 & 8 & 8 & 8 & 8 & 8 & 16 & 8 & 8\\
\hline
\end{array}
\]
We observe that the $\phi_3$ weight yields just the biquasile counting 
invariant for the links in the table, while $\phi_1$ and $\phi_2$ both yield 
proper enhancements.
\end{example}

\begin{proposition} 
\label{linear}
Let $A=\mathbb{Z}_n$ and $X=\mathbb{Z}_n$ with a choice of 
$d,s,n\in X^{\times}$, making $X$
a finite Alexander biquasile. Then for any $\gamma\in\mathbb{Z}_n$, the map 
$\phi:\mathbb{Z}_n[X]^3\to\mathbb{Z}_n$ given by
\[\phi(x,y,z) = -\gamma(s^{-1}n^{-1} +dn) x + \gamma s^{-1}d y + \gamma z\]
defines a Boltzmann weight which we call a \textup{linear Boltzmann weight}. 
\end{proposition}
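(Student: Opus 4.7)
The goal is to verify that the linear $\phi(x,y,z) = \alpha x + \beta y + \gamma z$ with $\alpha = -\gamma(s^{-1}n^{-1}+dn)$ and $\beta = \gamma s^{-1}d$ satisfies the two axioms of Definition~\ref{BW_Enhancement}. Because the Alexander operations $x\cdot y = dx+sy$ and $x\ast y = -dsn^2 x + ny$ are $\mathbb{Z}_n$-linear in each input and $\phi$ is itself linear, every expression that appears in either axiom unfolds to a $\mathbb{Z}_n$-linear form in the underlying inputs $x,a,b,y$, and each axiom reduces to a comparison of coefficients.

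For axiom~(i) I would first compute the auxiliary operations: solving the defining equations for $\backslash,\,/,\,\backslash^{\ast}$ yields $y\backslash z = s^{-1}(z-dy)$, $z/y = d^{-1}(z-sy)$, and $y\backslash^{\ast} z = n^{-1}z + dsn\,y$. In particular $x\backslash^{\ast} x = (n^{-1}+dsn)x$, and hence $a\backslash(x\backslash^{\ast}x) = (s^{-1}n^{-1}+dn)x - s^{-1}d\,a$ and $(x\backslash^{\ast}x)/b = (d^{-1}n^{-1}+sn)x - d^{-1}s\,b$. Substituting these into $\phi$ and using the prescribed values of $\alpha$ and $\beta$, both expressions collapse to $0$. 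In fact these two vanishing conditions are \emph{equivalent} to the choice $\alpha=-\gamma(s^{-1}n^{-1}+dn)$, $\beta=\gamma s^{-1}d$, which is where the coefficients in the statement come from.

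For axiom~(ii) I would expand the six nested terms that appear on the two sides of the R3 identity --- for instance $b\ast(x\cdot y) = ndx+nsy-dsn^2 b$, and $x\ast(a\cdot[b\ast(x\cdot y)])$ unfolds to an explicit linear combination of $x,a,b,y$ --- and then apply $\phi$ to each resulting triple. Because $\phi$ is linear, each of the six $\phi$-values becomes a $\mathbb{Z}_n$-linear form in $x,a,b,y$, so axiom~(ii) reduces to exactly four scalar identities, one per variable. Each identity is a polynomial relation in $d,s,n$ and their inverses which I would verify by collecting terms and substituting in $\alpha=-\gamma(s^{-1}n^{-1}+dn)$ and $\beta=\gamma s^{-1}d$.

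The main obstacle is the bookkeeping in axiom~(ii): the nested expression $x\ast(a\cdot[b\ast(x\cdot y)])$ has depth four, so after full expansion some coefficients involve products of $d,s,n$ of total degree up to six, and a sign or exponent slip is easy to make. My strategy to contain this is to keep $\alpha,\beta,\gamma$ symbolic throughout, separate contributions variable-by-variable, and substitute the R1-derived values of $\alpha,\beta$ only at the end, at which point each of the four coefficient identities should simplify to $0=0$. A short \texttt{Python} verification in the spirit of Example~\ref{ex:aff}, over a few small choices of $d,s,n$, provides a useful sanity check before the fully symbolic derivation is committed to paper.
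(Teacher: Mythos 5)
Your approach is exactly the paper's: compute the division operations $\backslash^{\ast},\backslash,/$ for the Alexander biquasile, observe that axiom (i) forces (and is satisfied by) the stated coefficients of $x$ and $y$, and then verify axiom (ii) by expanding all six $\phi$-terms into linear forms in $x,a,b,y$ and matching coefficients; your formulas for the division operations and for $a\backslash(x\backslash^{\ast}x)$ and $(x\backslash^{\ast}x)/b$ agree with the paper's. The only thing left to do is to actually carry out the coefficient-matching for axiom (ii) (the paper grinds this out explicitly, arriving at $\gamma(-s^{-1}n^{-1}-d^2n^2)x+\gamma(s^{-1}nd^2+s^{-1}d)a+\gamma(1+ns)y+\gamma(-s^{-1}n^{-1}-dsn^2)b$ on both sides), so your write-up is a correct plan in the same spirit rather than a complete proof.
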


\begin{proof}
Let $\phi(x,y,z) = -\gamma(s^{-1}n^{-1} +dn) x + \gamma s^{-1}d y + \gamma z$.
Recall that our biquasile operations are given by
\[x\ast y=-dsn^2 x+n y\quad\mathrm{and}\quad x\cdot y=dx+sy.\]
Then observing that
\[\begin{array}{rcl}
x\backslash^{\ast} y & = & dsn x+n^{-1} y, \\
x\backslash y & = & -ds^{-1}x+s^{-1}y \ \mathrm{and} \\
x/y & = & d^{-1}x-d^{-1}sy,
\end{array}\]
we compute
\begin{eqnarray*}
\phi(x,a,a\backslash(x\backslash^{\ast}x))
& = & \phi(x,a,-ds^{-1}a+s^{-1}(dsn x+n^{-1} x))\\
& = & -\gamma(s^{-1}n^{-1} +dn) x + \gamma s^{-1}d a + \gamma (-ds^{-1}a+s^{-1}(dsn+n^{-1}) x)) \\
& = & (-\gamma s^{-1}n^{-1} -\gamma dn+\gamma dn+\gamma s^{-1}n^{-1})) x + (\gamma s^{-1}d -\gamma ds^{-1})a\\
& = & 0,
\end{eqnarray*}
\begin{eqnarray*}
\phi(x,(x\backslash^{\ast} x)/b,b)
& = & \phi(x,d^{-1}(dsn x+n^{-1} x)-d^{-1}sb,b)\\
& = &-\gamma(s^{-1}n^{-1} +dn) x + \gamma s^{-1}d (d^{-1}(dsn x+n^{-1} x)-d^{-1}sb) + \gamma b \\
& = & (-\gamma s^{-1}n^{-1} -\gamma dn+\gamma s^{-1}dd^{-1}dsn+ \gamma s^{-1}dd^{-1}n^{-1}) x + (-\gamma s^{-1}dd^{-1}s + \gamma) b \\
& = & (-\gamma s^{-1}n^{-1} -\gamma dn+\gamma dn+ \gamma s^{-1}n^{-1}) x + (-\gamma + \gamma) b \\
& = & 0
\end{eqnarray*}
so condition (i) is satisfied.

Checking condition (ii), we have on the left side
\begin{eqnarray*}
L& = & \phi(x,a,b)+\phi(b,x\ast(a\cdot b),y)
+\phi(x\ast(a\cdot b),a,b\ast([x\ast(a\cdot b)]\cdot y))\\
& = &
-\gamma(s^{-1}n^{-1} +dn) x + \gamma s^{-1}d a + \gamma b
-\gamma(s^{-1}n^{-1} +dn) b + \gamma s^{-1}d (-dsn^2 x+nd a+ns b) + \gamma y  \\ &&
-\gamma(s^{-1}n^{-1} +dn) (-dsn^2 x +nd a+ns b)+ \gamma s^{-1}d a 
+ \gamma (-dsn^2 b +n[d(-dsn^2 x +nd a+ns b)+sy]) \\
& = & 
\gamma ((-s^{-1}n^{-1}+dn) + s^{-1}d (-dsn^2)-(s^{-1}n^{-1} +dn)(-dsn^2) + nd(-dsn^2))x \\&&
+\gamma (s^{-1}d+(s^{-1}d)nd- (s^{-1}n^{-1} +dn)nd+s^{-1}d+nd(dn) )a \\
& & +\gamma (1+ns)y +\gamma (1-(s^{-1}n^{-1} +dn)+s^{-1}dns-(s^{-1}n^{-1} +dn)ns -dsn^2 +ndns)b\\
& = & 
\gamma (-s^{-1}n^{-1}-dn-d^2n^2 +dn +d^2sn^3 -d^2sn^3)x
+\gamma (s^{-1}d+s^{-1}nd^2- s^{-1}d -d^2n^2+s^{-1}d+d^2n^2 )a \\
& & +\gamma (1+ns)y 
+\gamma (1-s^{-1}n^{-1} -dn+dn -1 -dsn^2 -dsn^2 +dsn^2)b\\
& = & 
\gamma (-s^{-1}n^{-1}-d^2n^2)x
+\gamma (s^{-1}nd^2+s^{-1}d )a +\gamma (1+ns)y 
+\gamma (-s^{-1}n^{-1} -dsn^2)b
\end{eqnarray*}
while on the right side we have
\begin{eqnarray*}
R & = &\phi(b,x,y) +
\phi(x,a,b\ast(x\cdot y))
+\phi(b\ast(x\cdot y), x\ast(a\cdot[b\ast(x\cdot y)]),y) \\ 
& = &
-\gamma(s^{-1}n^{-1} +dn) b + \gamma s^{-1}d x + \gamma y
-\gamma(s^{-1}n^{-1} +dn) x + \gamma s^{-1}d a + \gamma (-dsn^2 b+nd x+ns y)\\ & &
-\gamma(s^{-1}n^{-1} +dn)(-dsn^2 b+nd x+ns y) 
+ \gamma s^{-1}d (-dsn^2 x+nd a+ns(-dsn^2 b+nd x+ns y) + \gamma y \\
& = & 
\gamma (s^{-1}d -(s^{-1}n^{-1} +dn)+nd -(s^{-1}n^{-1} +dn)nd+s^{-1}d(-dsn^2+nsnd) )x
\\&&
+\gamma (s^{-1}d+s^{-1}dnd)a
+\gamma (1+ns-(s^{-1}n^{-1} +dn)ns+ s^{-1}d(nsns)+1  )y\\&&
+\gamma (-(s^{-1}n^{-1} +dn)-dsn^2-(s^{-1}n^{-1} +dn)(-dsn^2)+s^{-1}dns(-dsn^2))b\\
& = & 
\gamma (s^{-1}d-s^{-1}n^{-1} -dn +nd -s^{-1}d -d^2n^2-d^2n^2+d^2n^2 )x\\&&
+\gamma (s^{-1}d+s^{-1}nd^2)a
+\gamma (1+ns-1 -dsn^2+ dsn^2+1  )y\\&&
+\gamma (-s^{-1}n^{-1} -dn-dsn^2+dn +d^2sn^3-d^2sn^3)b\\
& = & 
\gamma (-s^{-1}n^{-1} -d^2n^2 )x
+\gamma (s^{-1}d+s^{-1}nd^2)a
+\gamma (1+ns)y
+\gamma (-s^{-1}n^{-1}-dsn^2)b\\
\end{eqnarray*}
as required.
\end{proof}

\begin{example}\label{ex:lin}
The Alexander biquasile $\mathbb{Z}_3$ with $d=s=2$ and $n=1$ has operation 
matrix
\[\left[\begin{array}{rrr|rrr}
3 & 1 & 2 & 1 & 3 & 2 \\
2 & 3 & 1 & 3 & 2 & 1 \\
1 & 2 & 3 & 2 & 1 & 3
\end{array}\right].\]
Then up to scalar multiplication, it has one linear Boltzmann weight
$\phi:\mathbb{Z}_3[X^3]\to\mathbb{Z}_3$ given by
\begin{eqnarray*}
\phi & = & 
2\chi_{1,1,1}+\chi_{1,1,2}+\chi_{1,2,1}+2\chi_{1,2,3}+2\chi_{1,3,2}+\chi_{1,3,3} \\ & &
+2\chi_{2,1,2}+\chi_{2,3,1}+2\chi_{2,2,1}+\chi_{2,2,2}+\chi_{2,3,1}+2\chi_{2,3,3}\\ & &
+\chi_{3,1,1}+2\chi_{3,1,3}+2\chi_{3,2,2}+\chi_{3,2,3}+2\chi_{3,3,1}+\chi_{3,3,2}.
\end{eqnarray*}
Our \texttt{Python} computations say that this Boltzmann weight defines a 
trivial enhancement (i.e., just the counting invariant) for all prime
knots with up to 8 crossings and all prime links with up to 7 crossings. However, this is not the only trivial case. In fact, 
\end{example}

\begin{conjecture}
\label{conjecture}
The linear Boltzmann weights as defined by Proposition \ref{linear} is always a trivial enhancement. 
\end{conjecture}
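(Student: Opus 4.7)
The plan is to show that every linear Boltzmann weight $\phi$ defined in Proposition~\ref{linear} is a \emph{coboundary} of a linear function $\psi\colon X\to\mathbb{Z}_n$, and then to deduce that the resulting Boltzmann weight sum vanishes on every $X$-coloring of every dual graph diagram. Since the Alexander biquasile operations are $\mathbb{Z}_n$-linear and the zero coloring is always valid, the set of $X$-colorings of any dual graph diagram is a $\mathbb{Z}_n$-submodule, and the Boltzmann weight sum $BW\colon\{X\text{-colorings}\}\to\mathbb{Z}_n$ is $\mathbb{Z}_n$-linear with $BW(\mathbf{0})=0$. Hence triviality of the enhancement is equivalent to $BW(f)=0$ for every coloring $f$.

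The first step is to set $\psi(x)=-\gamma dn\,x$ and verify by a direct linear computation that
\[
\phi(x,a,b)=\psi(x)-\psi(y)\qquad\text{whenever}\qquad x=y\ast(a\cdot b).
\]
Substituting the explicit formula $y=-d^{-1}s^{-1}n^{-2}x+s^{-1}n^{-1}a+d^{-1}n^{-1}b$ (obtained by inverting the crossing relation) into $-\gamma dn(x-y)$ reproduces the expression $-\gamma(s^{-1}n^{-1}+dn)x+\gamma s^{-1}d\,a+\gamma b$ for $\phi$, so at every crossing $\phi$ records the $\psi$-jump between the input and output labels on the $\ast$-directed edge.

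By this coboundary form and linearity,
\[
BW(f)=-\gamma dn\sum_c\epsilon_c\bigl(f(x_c)-f(y_c)\bigr)=-\gamma dn\sum_{v}f(v)\bigl(\sigma_x(v)-\sigma_y(v)\bigr),
\]
where $\sigma_x(v)=\sum_{c:\,x_c=v}\epsilon_c$ and $\sigma_y(v)=\sum_{c:\,y_c=v}\epsilon_c$ are the signed counts of crossings at which $v$ is, respectively, the output and the input on the $\ast$-edge. The conjecture thereby reduces to the Kirchhoff-type identity $\sigma_x(v)=\sigma_y(v)$ at every vertex $v$ of every dual graph diagram.

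The main obstacle is establishing this Kirchhoff identity in full generality. The identity is easily verified at each local crossing configuration and on small examples: for the positive Hopf link, both $\ast$-endpoint vertices satisfy $\sigma_x=\sigma_y=1$, while for a Reidemeister~II configuration the two opposite-signed crossings produce $\sigma_x=\sigma_y=0$ at every endpoint. A uniform proof requires unpacking how the knot orientation forces the $\ast$-edge directions and crossing signs to balance at each region, so that $\sigma_x(v)=\sigma_y(v)$ becomes a consequence of the source-sink structure of the magnetic graph determined by the dual graph diagram. An alternative route is to formalize $\phi=\delta\psi$ within a 2-cocycle theory for biquasiles analogous to the quandle and biquandle cocycle theories of \cite{CJKLS,EN} and to invoke the general principle that 2-coboundaries always yield trivial enhancements.
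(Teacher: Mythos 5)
The statement you are addressing is posed in the paper only as Conjecture \ref{conjecture}: the authors give no proof, support it solely by computer checks for Alexander biquasiles over $\mathbb{Z}_n$ with $n\leq 7$ on small knots and links, and explicitly list proving it among the open questions. So there is no argument in the paper to compare yours to, and a complete argument here would be a new result. Your reduction is genuinely promising: the coboundary identity is algebraically correct (one can check both $\phi(x,a,b)=-\gamma dn\,(x-x/^{\ast}(a\cdot b))$, which is the form you state, and $\phi(x,a,b)=-\gamma s^{-1}n^{-1}(x-x\ast(a\cdot b))$, which pairs the two regions actually joined by the directed edge at the crossing carrying the weight $\phi(x,a,b)$; you will need to match the correct pairing to the paper's crossing convention, a fixable bookkeeping point). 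The observation that colorings form a $\mathbb{Z}_n$-module on which $BW$ is linear, so that triviality amounts to $BW\equiv 0$, is also sound.

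The genuine gap is exactly where you locate it: the Kirchhoff identity $\sigma_x(v)=\sigma_y(v)$ carries the entire content of the conjecture, and you only verify it on the Hopf link and a Reidemeister II configuration. Two warnings about this step. First, the identity as stated is a property of the bare diagram, required coefficient-by-coefficient at every vertex, which is strictly stronger than what is needed ($BW$ only has to vanish on the submodule of colorings); it is not clear the stronger statement is even true. Second, whether it holds depends delicately on the conventions of \cite{dsn2} governing which of the two edges at each crossing is directed and in which direction, and you never pin these down. In the standard trefoil diagram, for instance, the three directed edges either form a coherent cycle through the three lobe regions (in which case your identity holds) or three like-oriented parallel edges joining the central and outer regions (in which case it fails and $BW$ is nonzero on explicit colorings); the $3$-fold symmetry cannot decide between these, only the actual crossing rule can. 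Until the Kirchhoff identity is derived from the source-sink structure of the magnetic graph, or $\phi$ is exhibited as a coboundary in a biquasile cohomology theory (which, as the paper's first open question notes, does not yet exist), what you have is a valuable reduction of the conjecture, not a proof of it.
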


Our \texttt{Python} computations reveal that all Alexander biquasiles over 
$\mathbb Z_n$ where  $0<n \leq 7$ only have trivial 
linear Boltzmann enhancements on prime classical knots with up to eight 
crossings and prime classical links with up to seven crossings.

\section{Questions}\label{Q}

We end with a few collected questions for future research.

\begin{itemize}
\item In several recent papers such as \cite{MN1, MN2}, Niebrzydowski and 
collaborators have studied region colorings of knots and link by structures 
called \textit{knot-theoretic ternary quasigroups} and have introduced a 
homology theory for these structures. A biquasile determines a knot-theoretic 
ternary quasigroup by 
\[abcT=b\ast(a\cdot c)\]
and our Boltzmann weights correspond to these cocycles. What is the exact 
relationship between these structures?

\item Given two distinct knots or links, is there always a pair of biquasile 
and Boltzmann weight such that the enhanced invariant distinguishes the pair?

\item Is Conjecture \ref{conjecture} true? 
%That is, can we show that Alexander biquasiles over $\mathbb Z_n$ for all $n \in \mathbb Z^{>0}$ only have Boltzmann weights that correspond to the trivial enhancement? 

\item What are the conditions for polynomial Boltzmann weights of higher degree
for Alexander biquasiles analogous to the linear Boltzmann weights in 
Proposition \ref{linear}? 
%For example, what conditions on a quadratic form
%\[\phi(x,y,z) = a_{11} x^2+ (a_{12} +a_{21}) xy + a_{22} y^2+ (a_{13}+a_{31}) x%z+(a_{23}+a_{32}) yz+ a_{33} z^2 \] 
%yield a biquasile Boltzmann weight?

\end{itemize}

\bibliography{whc-dn-sn}{}
\bibliographystyle{abbrv}

\bigskip

\noindent
\textsc{Department of Mathematics \\
Univ. of California, Los Angeles \\
520 Portola Plaza, Los Angeles, CA 90095 }

\bigskip

\noindent
\textsc{Department of Mathematical Sciences \\
Claremont McKenna College \\
850 Columbia Ave. \\
Claremont, CA 91711} 

\medskip

\end{document}